\newtheorem{thm}{Theorem}[section]
\newtheorem{prop}[thm]{Proposition}
\newtheorem{lem}[thm]{Lemma}
\newtheorem{qst}[thm]{Question}
\theoremstyle{definition}
\newtheorem{eg}[thm]{Example}
\theoremstyle{remark}
\renewcommand{\xto}[1]{\overset{#1}{\lto}}
\newcommand{\N}{\mathbb N}
\newcommand{\Z}{\mathbb Z}
\renewcommand{\ker}{\mathop{\mathrm{Ker}}\nolimits}
\newcommand{\coker}{\mathop{\mathrm{Coker}}\nolimits}
\newcommand{\into}{\hookrightarrow}
\renewcommand{\tilde}{\widetilde}
\newcommand{\reg}{\mathop{\mathrm{reg}}\nolimits}
\newcommand{\codim}{\mathop{\mathrm{codim}}\nolimits}
\newcommand{\type}{\mathop{\mathrm{type}}\nolimits}
\newcommand{\hilb}{\mathop{\mathrm{Hilb}}\nolimits}
\newcommand{\ch}{\mathop{\mathrm{char}}\nolimits}
\newcommand{\Syz}{\mathop{\mathrm{Syz}}\nolimits}
\newcommand{\tensor}{\otimes}
\newcommand{\tor}{\mathop{\mathrm{Tor}}\nolimits}
\renewcommand{\hom}{\mathop{\mathrm{Hom}}\nolimits}
\newcommand{\ext}{\mathop{\mathrm{Ext}}\nolimits}
\renewcommand{\lto}{\mathop{\longrightarrow\,}\limits}
\begin{document}

\title[Quadratic Gorenstein algebras]{Quadratic Gorenstein algebras with many surprising properties}
\author[J. McCullough]{Jason McCullough}
\address{Department of Mathematics, Iowa State University, Ames, IA 50011}
\email{jmccullo@iastate.edu}

\author[A. Seceleanu]{Alexandra Seceleanu}
\address{Department of Mathematics, University of Nebraska, Lincoln, NE 68588}
\email{aseceleanu@unl.edu}

\begin{abstract} Let $k$ be a field of characteristic $0$.  Using the method of idealization, we show that there is a non-Koszul, quadratic, Artinian, Gorenstein, standard graded $k$-algebra of regularity $3$ and codimension $8$, answering a question of Mastroeni, Schenck, and Stillman.  We also show that this example is minimal in the sense that no other idealization that is non-Koszul, quadratic, Artinian, Gorenstein algebra, with regularity $3$ has smaller codimension.

We also construct an infinite family of graded, quadratic, Artinian, Gorenstein algebras $A_m$, indexed by an integer $m \ge 2$, with the following properties: (1) there are minimal first syzygies of the defining ideal in degree $m+2$, (2) for $m \ge 3$, $A_m$ is not Koszul, (3) for $m \ge 7$, the Hilbert function of $A_m$ is not unimodal, and thus (4) for $m \ge 7$, $A_m$ does not satisfy the weak or strong Lefschetz properties.  In particular, the subadditivity property fails for quadratic Gorenstein ideals.

Finally, we show that the idealization of a construction of Roos yields non-Koszul quadratic Gorenstein algebras such that the residue field $k$ has a linear resolution for precisely $\alpha$ steps for any integer $\alpha \ge 2$.  Thus there is no finite test for the Koszul property even for quadratic Gorenstein algebras.

\end{abstract}

\keywords{free resolution, regularity, Gorenstein, Koszul, idealization}

\maketitle
 
 \section{Introduction}
 
 Let $k$ be a field and let $S = k[x_1,\ldots,x_c]$ be a standard graded polynomial ring over $k$.  Consider $R = S/I$ an Artinian standard graded quotient of $S$.  A recent problem that has attracted some attention is to identify what conditions on a quadratic Gorenstein algebra $R$ force $R$ to be Koszul.  While every such $R$ with $\reg(R) \le 2$ is Koszul \cite{CRV}, Mastroeni, Schenck, and Stillman \cite{MSS1} constructed quadratic, Gorenstein algebras with $r=\reg(R) = 3$ and $\codim(R) = c$ for all $c \ge 9$; this negatively answered a question of Conca, Rossi, and Valla \cite{CRV}.  Mastroeni et. al. pose the following question: For which positive integers $(r,c)$ does there exist a non-Koszul, quadratic, Gorenstein algebra with regularity $r$ and codimension $c$?  In a second paper \cite{MSS2}, they settle the question in all cases except three, namely $(r,c) = (3,6), (3,7),$ and $(3,8)$.  The first result of this paper, given in Section~\ref{s3},  is to settle the case $(r,c) = (3,8)$ by finding a non-Koszul, quadratic, Gorenstein algebra with these parameters. We do so by applying Nagata's idealization construction to a non-Koszul Artinian algebra of codimension $4$, which comes from Roos' list of quadratic algebras in four variables \cite{Roos}.  Moreover, we show that the other two cases  $(r,c) = (3,6), (3,7)$ cannot be similarly settled via idealizations.
 
 Our second construction addresses the subadditivity property of Gorenstein ideals.  Set $t_i(R) = \sup \{ j\,|\,\tor_i^S(R,k)_j \neq 0\}$.  The numbers $t_i(R)$ measure the maximal degree of a minimal generator of the $i$-th syzygy module of $R$ as an $S$-module.  They are primarily of interest because of their relation to regularity as $\reg(S/I) = \max_{i \ge 0}\{t_i(S/I) - i\}$.  The ring $R$ is said to satisfy the {\em subadditivity property} if $t_a(R) + t_b(R) \ge t_{a+b}(R)$ for all $a, b \ge 1$.  It is easy to see that complete intersections satisfy subadditivity (Proposition \ref{prop:CIsub}) while general Cohen-Macaulay ideals do not (cf. \cite[Example 4.4]{EHU}).  Several recent papers have studied the subadditivity property for various classes of ideals \cite{ACI1, F, HS}. It is conjectured that monomial ideals and Koszul ideals satisfy subadditivity \cite[Conjecture 6.4]{ACI2}.  To our knowledge, there were no known counterexamples to subadditivity for Gorenstein ideals; some positive results for Gorenstein ideals are proved in \cite{ES}.  In Section~\ref{s4}, we show that subadditivity fails in a strong way for quadratic Gorenstein ideals.  As a consequence of our methods, we obtain an infinite family of quadratic Gorenstein ideals that are non-Koszul, have arbitrarily high degree first syzygies, have non-unimodal Hilbert function, and do not satisfy the strong or weak Lefschetz properties.  This provides a counterexample to a conjecture of Migliore and Nagel \cite[Conjecture 4.5]{MN}; an earlier counterexample was given by Gondim and Zappala \cite{GZ}.  
 
 The third construction modifies a separate example of Roos \cite{Roos2} to show that there is no finite test of the Koszul property even for quadratic Gorenstein algebras.  In Section~\ref{s5}, we show that for any integer $\alpha \ge 2$, there is a quadratic, Artinian, Gorenstein $k$-algebra $B_\alpha$ with $\codim(B_\alpha) = 14$ and $\reg(B_\alpha) = 3$ such that the resolution of $k$ as a $B$-module is linear for precisely $\alpha$ many steps.

\section{Background}
 
 Here we collect notation and results needed in the rest of the paper.  Let $k$ be a field, $S = k[x_1,\ldots,x_n]$ a standard graded polynomial ring over k, and $R = S/I$, where $I$ is a homogeneous ideal of $S$.  Then $R$ inherits a decomposition $R = \oplus_{i \ge 0} R_i$ as $K$-vector spaces with the property that $R_i \cdot R_j \subseteq R_{i+j}$.  The {\em Hilbert function} of $R$ is $\mathrm{HF}_R(i)=\dim_k(R_i)$.  If $\mathrm{HF}_R(i) = 0$ for $i \gg 0$, $R$ is Artinian; this is equivalent to requiring $\dim_k(R) < \infty$ or that $R$ satisfies the descending chain condition on ideals.  The generating function for the Hilbert function is the {\em Hilbert series} of $R$ defined as $\mathrm{HS}_R(t) = \sum_{i} \dim_k(R_i)t^i$ and similarly for a graded $R$-module.  For a graded Artinian  ring $R$, the {\em $h$-vector} records the nonzero values of the Hilbert function of $R$. The syzygy modules of $R$ are denoted $\Syz_i^S(R)$. The {\em regularity} of $R$ is $\reg(R) = \max\{j\,|\,\beta_{ij}^S(R) \neq 0\}$, where $\beta_{ij}^S(R) = \tor_i^S(R,k)_j$ are the graded Betti numbers of $R$ over $S$.  Regularity is one of the most well-studied invariants of graded $k$-algebras and has connections to sheaf cohomology and computational complexity \cite{BM}.  In particular, if $R = S/I$ as above, $\reg_S(S/I) + 1$ is an upper bound on the degrees of a minimal generating set of $I$; however, there are examples showing that $\reg_S(S/I)$ can be doubly exponential in the degrees of the generators of $I$ \cite{Ko}.
 
  The ring $R$ is called {\em Koszul} if $k$ has a linear free resolution over $R$; that is, $\beta^R_{ij}(k) = 0$ for all $j > i$.   It is well-known that Koszul algebras are defined by quadratic ideals and that ideals having a Gr\"obner basis of quadrics define Koszul algebras, but both of these implications are irreversible \cite[Remark 1.10 and Example 1.20]{Co}.  Every quadratic
complete intersection (that is, rings of the form $S/(f_1,\ldots,f_m)$, where $f_1,\ldots,f_m$ is a graded regular sequence on $S$) is Koszul by a result of Tate \cite{Tate}.  There are many examples of Koszul algebras in algebraic geometry and these algebras enjoy a rich duality theory.  The article \cite{Conca} contains a modern introduction to the theory of (commutative) Koszul algebras.

A graded Artinian $k$-algebra $R$ is said to satisfy the {\em weak Lefschetz property} if there is a linear form $\ell\in R_1$ such that for each non negative integer $i$ the $k$-linear map $R_i\to R_{i+1}, r\mapsto \ell r$ is either injective or surjective. Similarly,  $R$ is said to satisfy the {\em strong Lefschetz property} if there is a linear form $\ell\in R_1$ such that for each pair of non negative integers $i,j$ the $k$-linear map $R_i\to R_{i+j}, r\mapsto \ell^j r$ is either injective or surjective.  Lefshetz properties of Artinian $k$-algebras have been well-studied and we refer the reader to \cite{Lef} or \cite{MN2} for an overview of the area.
 
  If $R$ is graded Artinian, then the canonical module of $R$ is given by $\omega_R = \ext_S^n(R,S)(-n)$ and the canonical module of an $R$-module $M$ is given by $\omega_M = \ext_S^n(M,S)(-n)$.  In this case $R$ is called {\em level} if $\omega_R$ is generated in a single degree.  The minimal number of generators of $\omega_R$ is called the {\em type} of $R$ and denoted throughout this paper by $\type(R)$.  If $\type(R) = 1$, i.e.~if $\omega_R$ is isomorphic to $R$, up to a shift in the grading, then $R$ is Gorenstein.  Equivalently, $R$ is Artinian and Gorenstein if it is injective as an $R$-module.  Gorenstein ideals have symmetric Betti tables and thus Gorenstein rings have palindromic $h$-vectors.  There are many examples of Gorenstein rings of interest in algebraic geometry, such as coordinate rings of many canonical curves, rings of invariants, and monomial curves.  We refer the reader to \cite{Hu} for a history of Gorenstein rings.
  
  Following \cite{MSS1}, we say that $R$ is {\em superlevel} if $R$ is level and $\omega_R$ is linearly presented over $R$.  Note that for $R$ to be superlevel, it is sufficient for $R$ to be level and $\omega_R$ be linearly presented over $S$.  The {\em idealization} (sometimes called the Nagata idealization or  trivial extension) of $R$ with respect to its canonical module is the ring
  \[
  \tilde{R} := R \ltimes \omega_R(- \reg(R) - 1),
  \]  with multiplication given by $(r_1, z_1) \cdot (r_2,z_2) = (r_1r_2, r_1z_2 + r_2z_1)$.  When $R$ is level, $\tilde{R}$ is a standard graded ring.  It is well-known that when $R$ is Artinian, $\tilde{R}$ is Artinian and Gorenstein; see \cite[proof of Theorem 3.3.6]{BH} or \cite[Theorem 2.76]{HMMNWW}.  
  Mastroeni, Schenck, and Stillman observed the following:
  
  \begin{thm}[{\cite[Proposition 2.2, Lemma 2.3, Theorem 2.5]{MSS1}}]
  \label{idealize} 
  Let $R = S/I$ be a standard graded, Artinian $k$-algebra.  
  \begin{enumerate}
\item  If $R$ is level, then $\tilde{R}$ is a standard graded, Artinian, Gorenstein $k$-algebra.   In this case 
\[\codim(\tilde{R}) = \codim(R) + \mathrm{type}(R) \quad \text{ 
 and } \quad
 \reg(\tilde{R}) = \reg(R) + 1.\]
\item If $R$ is quadratic and superlevel, then $\tilde{R}$ is quadratic.  
\item If $R$ is not Koszul, then $\tilde{R}$ is not Koszul.
\item $\tilde{R} \cong S[y_1,\ldots,y_t]/((I) + \mathcal{L} + (y_1,\ldots,y_t)^2)$, where $t = \type(R)$ and \[\mathcal{L} = \left.\left( \sum_{i = 1}^t f_i y_i\,\right| \,(f_1,\ldots,f_t) \in \Syz_1^S(\omega_R)\right).\]
\end{enumerate}
\end{thm}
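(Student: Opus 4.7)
My plan is to prove part (4) first, since it gives a concrete presentation that drives both (1) and (2), and then address (1), (2), and (3) in turn. The main obstacle is the Gorenstein claim of (1), which I would dispatch by citing the classical duality computation for idealizations rather than redoing it.

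For (4), pick homogeneous generators $\omega_1,\ldots,\omega_t$ of $\omega_R$; by levelness they may be chosen in a single degree, which is $-\reg(R)$ by graded local duality, so the elements $y_i := (0,\omega_i) \in \tilde R$ have degree $1$ after the shift by $-\reg(R)-1$. The idealization product forces $y_iy_j = 0$ for all $i,j$, since the $\omega_R$-component of $(0,\omega_i)(0,\omega_j)$ vanishes by definition. For any $S$-syzygy $(f_1,\ldots,f_t)$ of $(\omega_1,\ldots,\omega_t)$, $\sum f_iy_i = (0,\sum f_i\omega_i) = 0$ in $\tilde R$. Hence one gets a well-defined graded surjection $\varphi\colon S[y_1,\ldots,y_t]/(I+\mathcal L+(y_1,\ldots,y_t)^2) \twoheadrightarrow \tilde R$. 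To verify $\varphi$ is an isomorphism, use $y_iy_j = 0$ to write every element of the source as $p + \sum q_iy_i$ with $p,q_i \in S$; quotienting the $p$-part by $I$ recovers $R$, and quotienting the $y$-part by $\mathcal L$ (the $S$-syzygies of $\omega_R$ reduce modulo $I$ to give the full $R$-syzygy module of $\omega_R$) recovers $\omega_R(-\reg(R)-1)$.

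For (1), standard gradedness follows from the presentation of (4) because all generators sit in degree $1$. Artinianness is clear from $\dim_k \tilde R = \dim_k R + \dim_k \omega_R < \infty$. The Gorenstein statement is the classical computation $\omega_{\tilde R} \cong \tilde R$ up to a shift when $\tilde R = R \ltimes \omega_R$; see \cite[Theorem 3.3.6]{BH}. The codimension equals the embedding dimension $\dim_k \tilde R_1 = \dim_k R_1 + \dim_k (\omega_R)_{-\reg(R)} = \codim(R) + \type(R)$. Since $\tilde R$ is Artinian, $\reg(\tilde R)$ equals its socle degree; the top graded piece of $\omega_R$ lives in degree $0$ by graded local duality, so after the shift the top nonzero graded piece of $\tilde R$ is in degree $\reg(R)+1$, establishing the regularity formula.

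For (2), using the presentation of (4), $I$ is quadratic by hypothesis and $(y_1,\ldots,y_t)^2$ is quadratic by construction. Each generator of $\mathcal L$ has the form $\sum f_iy_i$ with $\deg y_i = 1$, so it is quadratic exactly when the $f_i$ may be chosen linear, which is precisely the requirement that $\omega_R$ be linearly presented, i.e., the superlevel hypothesis. For (3), the graded $k$-algebra maps $\iota\colon R\to\tilde R$, $r\mapsto (r,0)$, and $\pi\colon \tilde R\to R$, $(r,z)\mapsto r$, satisfy $\pi\circ\iota=\mathrm{id}_R$, exhibiting $R$ as a graded algebra retract of $\tilde R$. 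The Koszul property is preserved under graded algebra retracts by a standard change-of-rings argument (see \cite{Co}), so $\tilde R$ Koszul forces $R$ Koszul; the contrapositive is exactly (3).
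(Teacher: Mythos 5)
The paper does not prove Theorem~\ref{idealize}; it simply cites \cite{MSS1} (Proposition 2.2, Lemma 2.3, Theorem 2.5), so there is no in-paper argument to compare against. Your reconstruction follows the natural route — establish the presentation in (4), then read off (1), (2) and deduce (3) from the retraction — and is essentially correct; this is also the structure of the argument in \cite{MSS1}.

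One point in (2) is glossed over. The definition of superlevel in the paper is that $\omega_R$ is linearly presented \emph{over $R$}, whereas your $f_i$ range over $S$-syzygies, so you are implicitly invoking linear presentation over $S$. These are reconciled as follows: $\Syz_1^S(\omega_R)$ is generated by lifts of a generating set of $\Syz_1^R(\omega_R)$ together with the Koszul syzygies $I\cdot S^t$; the latter contribute generators $g\,y_j$ with $g\in I$, which already lie in $(I)T$ and hence are redundant in $(I)+\mathcal{L}+(y_1,\dots,y_t)^2$. Thus linear presentation over $R$ (plus $I$ quadratic) does suffice to make the ideal quadratic, as the theorem asserts, but a sentence to this effect is needed to close the argument. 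The remaining parts — the grading and the degree-shift bookkeeping for $\omega_R$, the codimension and regularity computations via socle degree, the citation of \cite{BH} for Gorensteinness, and the retraction argument for (3) — are all sound.
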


\noindent Thus idealizations of superlevel, Artinian, quadratic algebras are a convenient way of constructing quadratic Gorenstein algebras.  All three of the constructions in this paper use this idea.

 \section{A non-Koszul, quadratic, Gorenstein ring with codimension 8 and regularity 3}\label{s3}
 
A construction of Matsuda \cite{Mat} shows that not every quadratic, Gorenstein ideal is Koszul.  Matsuda's example had regularity $4$.   Conca, Rossi, and Valla showed that every quadratic, Gorenstein algebra with regularity 2 was Koszul \cite[Proposition 2.12]{CRV} and asked asked whether every such algebra with regularity $3$ was Koszul \cite[Question 6.10]{CRV}.   It is known that all quadratic, Gorenstein algebras of regularity $3$ and codimension at most $5$ are Koszul \cite{Caviglia, CRV}.   Mastroeni, Schenck, and Stillman \cite{MSS1} constructed counterexamples in all codimensions $c \ge 9$.  They then posed the following question:

\begin{qst}[{\cite[Question 1.3]{MSS1}}]\label{rc} For which positive integers $c$ and $r$ is every quadratic Gorenstein
ring $R$ with $\codim(R) = c$ and $\reg(R) = r$ Koszul?
\end{qst}

\noindent In a second paper \cite{MSS2} Mastroeni, Schenck and Stillman settle this question for all ordered pairs $(r,c)$ except for $(3,6)$, $(3,7)$, and $(3,8)$.  In this section, we show that the answer to Question~\ref{rc} is negative for $(r,c) = (3,8)$.
We construct our example by starting with one of the $4$-variable quadratic algebras that Roos compiled in \cite{Roos}.  In particular, he constructed a non-Koszul, Artinian, quadratic, superlevel $k$-algebra of regularity $2$.  Applying Theorem~\ref{idealize} to it, we obtain the following result.
 
 \begin{thm} Let $k$ be a field of characteristic $0$ and let $S = k[u,x,y,z]$.  Let $I = (x^2+yz+u^2,xu,x^2+xy,xz+yu,zu+u^2,y^2+z^2)$.  Then $R = S/I$ is  non-Koszul, Artinian, superlevel, with $\reg(R) = 2$ and $\type(R) = 4$.  Consequently, its idealization $\tilde{R} = R \ltimes \omega_R(-5)$ is a non-Koszul, quadratic, Gorenstein, Artinian, graded $k$-algebra with $\reg(\tilde{R}) = 3$ and $\codim(\tilde{R}) = 8$.
 \end{thm}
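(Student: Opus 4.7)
The plan is to verify five properties of $R = S/I$ — Artinian, $\reg(R) = 2$, $\type(R) = 4$, superlevel, and non-Koszul — and then apply parts (1)–(3) of Theorem~\ref{idealize} to obtain the remaining claims about $\tilde R$. The first four properties are finite and can be extracted from the minimal graded free resolution of $R$ as an $S$-module together with the definition of superlevel; the fifth requires information about the infinite minimal free resolution of $k$ as an $R$-module. All of these verifications are amenable to a computer algebra system such as Macaulay2 working over $\Q$, and the characteristic zero hypothesis guarantees the outputs are the generic ones.

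Concretely, I would first compute a Gr\"obner basis of $I$ and check that the initial ideal contains a power of the maximal ideal, so that $R$ has Krull dimension zero and is therefore Artinian with Hilbert series a polynomial. Next, I would compute the minimal $S$-free resolution of $R$, which has length $4$ since $R$ is Cohen--Macaulay of codimension $4$, and read from its Betti diagram that all nonzero Betti numbers lie in rows $j - i \in \{0,1,2\}$, giving $\reg(R) = 2$. From the last column of the same diagram one reads that $\beta^S_{4,j}(R)$ is concentrated in a single value of $j$ and equals $4$ there; since $\omega_R = \ext_S^4(R, S)(-4)$, this says that $\omega_R$ has four minimal generators in a single degree, so $\type(R) = 4$ and $R$ is level. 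To promote level to superlevel, I would invoke the remark following Theorem~\ref{idealize}: it suffices that $\omega_R$ be linearly presented as an $S$-module, which by local duality translates into a concentration condition on the second-to-last column of the Betti table of $R$ that is again visible from the same computation.

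The main obstacle is non-Koszulness, the only property whose verification cannot be read off the finite $S$-resolution of $R$. The most convenient route is to compute the formal inverse $\mathrm{HS}_R(-t)^{-1}$: for Koszul algebras this inverse equals the Poincar\'e series $P^R_k(t) = \sum_i \dim_k \tor_i^R(k, k)\, t^i$, which has nonnegative coefficients, so exhibiting a single negative coefficient in $\mathrm{HS}_R(-t)^{-1}$ immediately rules out Koszulness. If this rational-function test happens to be inconclusive, one can instead compute the first few $\tor_i^R(k,k)$ directly with Macaulay2 and exhibit a nonlinear Betti number $\beta^R_{i,j}(k)$ with $j > i$. In fact $R$ appears on Roos' list in \cite{Roos} precisely as an enumerated example of a non-Koszul quadratic algebra with linearly presented canonical module, so this check has already been carried out there. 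Once the five properties are established, Theorem~\ref{idealize}(1)--(3) immediately yields that $\tilde R$ is standard graded, Artinian, quadratic, Gorenstein, not Koszul, with $\codim(\tilde R) = \codim(R) + \type(R) = 4 + 4 = 8$ and $\reg(\tilde R) = \reg(R) + 1 = 3$.
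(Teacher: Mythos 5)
Your proposal follows essentially the same route as the paper: compute the minimal $S$-free resolution of $R$ with Macaulay2, read off Artinian-ness, $\reg(R)=2$, $\type(R)=4$, level, and superlevel from the Betti diagram, invoke Roos' computation for non-Koszulness, and apply Theorem~\ref{idealize}. One caveat worth flagging about your ``most convenient route'' for non-Koszulness: the Hilbert series of $R$ is $1 + 4t + 4t^2 = (1+2t)^2$, so $\mathrm{HS}_R(-t)^{-1} = (1-2t)^{-2} = \sum_{i\ge 0}(i+1)2^i t^i$ has all nonnegative coefficients, and the Fr\"oberg-type test is therefore inconclusive for this example (unsurprisingly, since Roos selected it precisely as a subtle non-Koszul quadratic algebra); your fallback of citing Roos, which is what the paper does, is the step that actually closes the argument.
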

 
 \begin{proof} That $S/I$ is not Koszul follows from computations done by Roos \cite{Roos}.  A Macaulay2 \cite{M2} calculation shows that $S/I$ has graded Betti table
 \[
 \begin{tabular}{r|ccccc}
      &0&1&2&3&4\\ \hline
      \text{0:}&1&\text{-}&\text{-}&\text{-}&\text{-}\\\text{1:}&\text{-}&6&4&\text{-}&\text{-}\\\text{2:}&\text{-}&\text{-}&9&12&4.\\\end{tabular}
 \]
 In particular, $S/I$ is superlevel and $\type(R) = 4$.  Therefore by Theorem~\ref{idealize}, $\tilde{R} = R \ltimes \omega_R(1)$ is a non-Koszul, quadratic, Gorenstein, standard graded $k$-algebra with $\reg(\tilde{R}) = 3$ and $\codim(\tilde{R}) = 8$.
 \end{proof}
 
 
 \noindent The $h$-vector of $\tilde{R}$ is $(1,8,8,1)$.  The above example comes from \cite[Example 57, Table 5]{Roos}.  Other examples can be constructed from \cite[Examples 55 and 56, Table 5]{Roos}.
 
 It is natural to ask whether one can use the idealization of a non-Koszul, superlevel, Artinian algebra $R$ to settle the two remaining cases $(r,c) = (3,6)$ and $(3,7)$.  We show next that this is impossible.  Since the codimension of the resulting idealization $\tilde{R}$ is $\mathrm{codim}(R) + \type(R)$, we would need to find a quadratic, superlevel, Artinian algebra with $\codim(R) + \type(R) \le 7$.  This is impossible in view of the following result.
  
\begin{prop} If $R$ is a level, Artinian, quadratic non-Koszul algebra with $\reg(R) = 2$, then $\codim(R) + \type(R) \ge 8$. Hence $\tilde{R} = R \ltimes \omega_R$ has codimension at least $8$.
\end{prop}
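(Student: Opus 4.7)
My plan is to prove the contrapositive: every level, Artinian, quadratic $k$-algebra $R$ with $\reg(R) = 2$ and $c + \tau \le 7$ is Koszul, where $c = \codim(R)$ and $\tau = \type(R)$. Since $R$ is Artinian with $\reg(R) = 2$, we have $R_i = 0$ for $i \ge 3$, and the level hypothesis forces the socle of $R$ to coincide with $R_2$, so the $h$-vector of $R$ is $(1, c, \tau)$. The vanishing of $R_3$ is equivalent to $S_1 \cdot I_2 = S_3$, which upon comparing dimensions yields
$$c \left( \binom{c+1}{2} - \tau \right) \,\ge\, \binom{c+2}{3}, \qquad \text{equivalently} \qquad \tau \,\le\, \frac{c^2 - 1}{3}.$$

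Next I would enumerate all pairs $(c, \tau)$ with $c \ge 2$, $\tau \ge 1$, $c + \tau \le 7$, and $\tau \le (c^2-1)/3$; this yields exactly nine pairs. When $\tau = 1$, $R$ is Artinian Gorenstein with socle degree $2$, and by \cite[Proposition 2.12]{CRV} $R$ is Koszul; this settles five of the nine pairs.

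The remaining cases are $(c, \tau) \in \{(3, 2), (4, 2), (4, 3), (5, 2)\}$. For each of these, I would parametrize level algebras with the prescribed $h$-vector via the Macaulay inverse system: such an $R$ is determined by a $\tau$-dimensional subspace $V \subseteq T_2$, where $T = k[y_1,\dots,y_c]$ is the polynomial ring on which $S$ acts by contraction, such that $\ann(V) \subseteq S$ is generated in degree $2$. Up to the action of $GL_c(k)$, these subspaces admit a short list of normal forms (a pencil of quadratic forms when $\tau = 2$, a net when $\tau = 3$). For each normal form, I would exhibit either a quadratic Gr\"obner basis for $\ann(V)$, making $R$ G-quadratic and hence Koszul, or an explicit Koszul filtration of $R$, in either case yielding Koszulness and the desired contradiction.

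The most delicate case I expect to be $(4, 3)$, involving nets of quadratic forms in four variables, where the classification is richest and the bookkeeping most involved; in the other three pairs, the parameter spaces are small enough that a direct analysis, possibly aided by Macaulay2 \cite{M2}, should suffice.
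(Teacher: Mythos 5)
Your reduction is correct: since $R$ is Artinian, level, quadratic, with $\reg(R)=2$, the $h$-vector is $(1,c,\tau)$ with $\tau = \type(R) = \dim_k R_2$, and the dimension count $c\bigl(\binom{c+1}{2}-\tau\bigr) \ge \binom{c+2}{3}$ (from $S_1 I_2 = S_3$) indeed yields $\tau \le (c^2-1)/3$, so your list of nine feasible pairs and the disposal of the five $\tau=1$ cases via \cite[Proposition 2.12]{CRV} are fine. However, your route diverges from the paper's, and in a way that leaves most of the real work still to be done. The paper does not enumerate pairs at all; it splits directly on $\tau = \dim_k R_2$ and invokes two off-the-shelf theorems: if $\dim_k R_2 \le 2$ then $R$ is Koszul by Backelin \cite[4.8]{Backelin}, and if $\dim_k R_2 = 3$ then $R$ is Koszul by \cite[Theorem 1.1]{Conca} (quadratic ideals whose space of quadrics has codimension $3$ in $S_2$ are G-quadratic). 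The case $\dim_k R_2 \ge 4$ is then impossible under $c+\tau\le 7$, since it would force $c\le 3$ while an Artinian quadratic algebra in $c\le 3$ variables has $\dim I_2 \ge c$ and hence $\dim_k R_2 \le 3$. Your plan for the four leftover pairs $(3,2),(4,2),(4,3),(5,2)$ — classify pencils or nets of quadratic forms up to $GL_c$ via inverse systems, then exhibit a quadratic Gr\"obner basis or Koszul filtration for each normal form — is sound in principle, but it amounts to re-proving special cases of Backelin's and Conca's theorems by hand; in particular $(4,3)$ is precisely the setting of Conca's paper. As written, those four cases are only proposed, not carried out, so the argument is not yet complete. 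Either finish the normal-form casework or, better, cite Backelin and Conca as the paper does, which collapses all of it to two references.
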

\begin{proof}
Assume towards a contradiction that $\codim(R) + \type(R) \le 7$.
Since $R$ is level with $\reg(R)=2$, $\type(R) =\hilb_R(2)$.  If $\hilb_R(2) \le 2$, then $R$ is Koszul by \cite[4.8]{Backelin}. 
 If $\hilb_R(2) =3 $, we may appeal to \cite[Theorem 1.1]{Conca} to conclude that $R$ is Koszul.  If $\hilb_R(2) \ge 4$, then  $c=\codim(R) \le 3$, which is impossible since $\hilb_R(2) \le \binom{c+1}{2} - 3 \le 3$ if $R$ is Artinian and quadratic.   
\end{proof}

\section{Subadditivity fails for Gorenstein ideals}\label{s4}

To place our second result in context, we begin by showing that homogeneous complete intersection rings $R$ enjoy the subadditivity property, that is  $t_a(R) + t_b(R) \ge t_{a+b}(R)$ for all $a, b \ge 1$.
\begin{prop} 
\label{prop:CIsub}
Subadditivity holds for homogeneous complete intersections.
\end{prop}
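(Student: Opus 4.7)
The plan is to use the explicit minimal free resolution of a complete intersection, namely the Koszul complex, to compute $t_i(R)$ exactly, and then reduce the subadditivity inequality to a completely elementary comparison of sums of degrees.

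First I would write $R = S/(f_1,\ldots,f_m)$ with $\deg(f_j) = d_j$, and order the degrees so that $d_1 \ge d_2 \ge \cdots \ge d_m$. Since $f_1,\ldots,f_m$ is a regular sequence on $S$, the Koszul complex $K_\bullet(f_1,\ldots,f_m;S)$ is a minimal graded free resolution of $R$ over $S$. Its $i$-th term is $\bigoplus_{T} S(-\sum_{j\in T} d_j)$ where $T$ runs over all $i$-element subsets of $\{1,\ldots,m\}$. Tensoring with $k$ kills all the differentials, so
\[
\tor_i^S(R,k)_n \ne 0 \iff n = \sum_{j\in T} d_j \text{ for some } T\subseteq\{1,\ldots,m\} \text{ with }|T|=i.
\]
Consequently $t_i(R) = d_1 + d_2 + \cdots + d_i$ when $1 \le i \le m$, and $\tor_i^S(R,k) = 0$ (so the inequality is vacuous on the right) when $i > m$.

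With this formula in hand, the subadditivity statement $t_a(R) + t_b(R) \ge t_{a+b}(R)$ reduces, in the nontrivial range $a+b \le m$, to
\[
(d_1 + \cdots + d_a) + (d_1 + \cdots + d_b) \ge d_1 + \cdots + d_{a+b},
\]
which after cancelling $d_1 + \cdots + d_a$ from both sides is equivalent to
\[
d_1 + d_2 + \cdots + d_b \ge d_{a+1} + d_{a+2} + \cdots + d_{a+b}.
\]
This follows immediately from the decreasing arrangement $d_i \ge d_{a+i}$ for each $i=1,\ldots,b$, summed term by term. There is no real obstacle here — once the Koszul complex identification gives a closed form for the $t_i$, the inequality is a triviality about sums of sorted sequences. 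The only points to be careful about are the boundary cases $a+b > m$ (trivial, since $t_{a+b}(R) = -\infty$ by convention) and making sure we use the minimality of the Koszul resolution to conclude that the formula for $t_i(R)$ is exact rather than just an upper bound.
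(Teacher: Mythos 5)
Your proof is correct and takes essentially the same route as the paper: identify $t_i$ from the Koszul complex after sorting the degrees in decreasing order, then reduce subadditivity to the termwise comparison $d_i \ge d_{a+i}$. The only difference is that you spell out the direct-sum structure of the Koszul terms and the vacuous case $a+b>m$, which the paper leaves implicit.
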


\begin{proof} Let $I = (f_1,\ldots,f_c)$ be a homogeneous complete intersection ideal with $\deg(f_i) = d_i$.  Then $S/I$ is resolved by a Koszul complex.  We order the generators so that $d_1 \ge d_2 \ge \cdots \ge d_c$.  By the construction of the Koszul complex, it follows that $t_i(S/I) = \sum_{j = 1}^i d_j$.  Hence for any positive integers $a,b$ with $a+b \le c$ we have
\[t_{a+b}(S/I) = \sum_{j = 1}^{a+b} d_j = \sum_{j = 1}^a d_j + \sum_{j = a+1}^{a+b} d_j \le \sum_{j = 1}^a d_j + \sum_{j = 1}^b d_j = t_a(S/I) + t_b(S/I).\]
\end{proof}

On the other hand, subadditivity fails in general, even for Cohen-Macaulay ideals cf.~\cite[Example 4.4]{EHU}.  Note that to study subadditivity for Cohen-Macaulay, and in particular, for Gorenstein rings, it suffices to consider Artinian rings $R = S/I$, since the graded Betti numbers of $R$ over $S$ are the same as those of $R/(\ell)$ over $S/(\ell)$ for any linear form $\ell \in S$ regular on $R$. 

The following Lemma is similar to an example due to Caviglia \cite[Example 4.4]{EHU}.  We use the notation $[-]:\Z\to\N, [x]=\max\{x,0\}$.

\begin{lem}\label{CMsubadd} Fix a natural number $m$ and a field $k$ with $\ch(k) = 0$ or $\ch(k)>2m+1$.  Let $S = k[x_1,\ldots,x_{2m}]$ and consider the ideals $C = (x_1^2,\ldots,x_{2m}^2)$ and $I = C + \left((x_1 + \cdots + x_{2m})^2\right)$.  Then for $m \ge 2$, $R := S/I$ is an Artinian, quadratic algebra that has the following properties.
\begin{enumerate}
\item The Hilbert function of $R$ is $\hilb_R(i) = \left [ \binom{2m}{i} - \binom{2m}{i-2}\right ]$.
\item $\reg(R) = m$.
\item $\beta_{2,m+2}^S(R) \neq 0$, and moreover, $t_2(R) = m+2$.
\item $R$ is superlevel.
\item $R$ is not Koszul.
\end{enumerate}
\end{lem}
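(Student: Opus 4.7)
The plan rests on the fact that $R = A/(\ell^2)$, where $A = S/(x_1^2, \dots, x_{2m}^2)$ is the Artinian complete intersection of quadrics with Hilbert series $(1+t)^{2m}$ and socle degree $2m$, and $\ell = x_1 + \cdots + x_{2m}$. Under the stated characteristic hypothesis it is classical that $\ell$ is a strong Lefschetz element of $A$, and this drives everything. SLP forces $\ell^2 : A_{i-2} \to A_i$ to have rank $\min(\binom{2m}{i-2}, \binom{2m}{i})$, which immediately yields $\hilb_R(i) = [\binom{2m}{i} - \binom{2m}{i-2}]$, giving (1). Since $R_m \ne 0$ and $R_{m+1} = 0$, and the regularity of an Artinian standard graded algebra equals its top nonvanishing degree, $\reg(R) = m$, giving (2).

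For (3), $t_2(R) \le \reg(R) + 2 = m+2$ is automatic. For the lower bound, SLP gives a nonzero $\bar\alpha$ in $\ker(\ell^2|_{A_m})$ (of dimension $\binom{2m}{m} - \binom{2m}{m+2} > 0$), and a lift $\alpha \in S_m$ produces a first syzygy of $I$ in internal degree $m+2$ whose $\ell^2$-coordinate has nonzero image $\bar\alpha$ in $A_m$. Every other minimal generator of $\Syz_1^S(I)$ is a Koszul syzygy among the $x_i^2$'s (contributing $\ell^2$-coordinate zero) or a Koszul syzygy between some $x_j^2$ and $\ell^2$ (contributing $\ell^2$-coordinate $\pm x_j^2 \in C$, whose class in $A$ is zero). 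Our syzygy is not in the submodule they generate, so it is minimal and $\beta_{2,m+2}^S(R) \ne 0$.

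For (4), to show $R$ is level take $r \in \mathrm{soc}(R)_i$ with $i<m$, lift to $\tilde r \in A_i$, and use SLP-injectivity of $\ell^2$ and $\ell$ on the relevant graded pieces of $A$ to write $x_j \tilde r = \ell^2 \sigma_j$, then $\tilde r = \ell \sigma$ with $\sigma = \sum_j \sigma_j$, and then $x_j \sigma = \ell \sigma_j$. Commuting $x_j x_k$ yields the Koszul-type relation $x_k \sigma_j = x_j \sigma_k$, and the exterior-like structure of $A$ (here $2m \ge 4$ is essential) forces $\sigma_j = x_j \tau_j$; the remaining compatibility conditions among the $\tau_j$'s extract a common $\tau$ with $\tilde r = \ell^2 \tau \in \ell^2 A$, so $r = 0$. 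For the linear presentation of $\omega_R$, Gorenstein duality of $A$ applied to the principal ideal $(\ell^2)$ identifies $\omega_R \cong (0:_A\ell^2)(-2m)$ as $R$-modules, and one shows $A_1 \cdot (0:_A\ell^2)_i = (0:_A\ell^2)_{i+1}$ for every $i \ge m$; under the Gorenstein pairing on $A$ this reduces to an SLP-based injectivity statement in complementary degrees. I expect this linear-presentation step to be the most delicate, since it requires uniform surjectivity across an infinite family of multiplication maps rather than a single combinatorial check.

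For (5), when $m \ge 3$ we have $t_2(R) = m+2 \ge 5$, exceeding the bound $t_2 \le 2 t_1 = 4$ that holds for any Koszul quadratic algebra by the subadditivity theorem of Avramov--Conca--Iyengar \cite{ACI1}, so $R$ cannot be Koszul. When $m = 2$ this bound is not violated, so I instead invoke Fr\"oberg's formula: if $R$ were Koszul then $1/\hilb_R(-t) = 1/(1 - 4t + 5t^2)$ would serve as the Poincar\'e series $P^R_k(t)$, but a direct expansion gives $1 + 4t + 11t^2 + 24t^3 + 41t^4 + 44t^5 - 29t^6 + \cdots$, whose negative coefficient at $t^6$ rules this out.
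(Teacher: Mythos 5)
Your treatment of parts (1), (2), (3) and (5) is sound.  For (1)--(2) you use the SLP of $\ell^2$ on $S/C$ exactly as the paper does.  For (3) the paper works with the linked ideal $L = C:I$ and the short exact sequence $0 \to S/L(-2) \xto{\ell^2} S/C \to S/I \to 0$ to detect $\beta_{2,m+2}$ via the long exact sequence of Tor; your version is the same phenomenon made explicit through the $\ell^2$-coordinate of a syzygy (which lives in $L$), and it works once you observe that all syzygies of $I$ in degree $< m+2$ have $\ell^2$-coordinate in $C$ by the injectivity of $\mu_i$ for $i\le m-1$.  For (5) your use of \cite[Theorem~6.2]{ACI2} for $m\ge 3$ matches the paper, and your Fr\"oberg-series computation for $m=2$ (the coefficient $-29$ at $t^6$ in $1/\mathrm{HS}_R(-t)$) is a clean replacement for the paper's Macaulay2 check and is correct.

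The genuine gap is part (4).  Your direct socle-chasing argument for levelness is only sketched at the crucial step (``the exterior-like structure of $A$ forces $\sigma_j = x_j\tau_j$; the remaining compatibility conditions \ldots extract a common $\tau$''), and the linear-presentation half is not actually addressed: showing $A_1\cdot(0:_A\ell^2)_i = (0:_A\ell^2)_{i+1}$ for $i\ge m$ is the statement that $\omega_R$ is generated in a single degree, i.e.\ levelness again, not that its first syzygies are linear.  The paper avoids all of this by pushing the linkage sequence one step further: since $C$ and $L$ have no minimal generators below degree $m$ beyond the quadrics of $C$, one gets $\beta_{i,j}(S/C)=0$ for $j<2i$ and $\beta_{i,j}(S/L(-2))=0$ for $j<\min\{m-2+i,2i-2\}$, and the long exact sequence then gives $\beta_{i,j}(S/I)=0$ for $j<\min\{2i,m+i\}$.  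Combined with $\reg(R)=m$ (so $\beta_{i,j}(R)=0$ for $j>m+i$) and $\pd_S R = 2m$, this forces the last two free modules $F_{2m}$ and $F_{2m-1}$ in the minimal $S$-free resolution of $R$ to be concentrated in degrees $3m$ and $3m-1$ respectively, which is exactly the superlevel condition.  You should replace your argument for (4) with this Betti-number constraint, which falls out of the same linkage sequence you already set up implicitly for part (3).
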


\begin{proof} Tensoring with a field extension of $k$, if necessary, one may assume that $k$ is infinite. 
Set $L = C:I$ and note that $L$ is directly linked to $I$.  
Let $\ell = x_1  + \cdots + x_{2m}$ and consider the  homomorphism 
$\mu:(S/C)\to (S/C)$, where $\mu(x)=\ell^2x$ for which $\ker(\mu)=L/C$ and $\coker(\mu)=S/I$.
Since $\ell^2$ is a strong Lefschetz element for $S/C$ (see \cite[Theorem 3.35]{Lef} and the references therein for the characteristic zero case and \cite[Theorem 7.2]{Cook} for positive characteristics),  the $k$-linear functions $\mu_i:(S/C)_i \overset{\ell^2}{\rightarrow} (S/C)_{i+2}$, obtained  by restricting $\mu$ to each of the graded components of $S/C$, are injective for $i \le m-1$ and surjective for $i \ge m - 1$.  It follows that the Hilbert function of $R = S/I$ is 
\[\hilb_R(i) = \left[ \dim_k\left((S/C)_i\right) - \dim_k\left((S/C)_{i-2}\right) \right] =  \left [ \binom{2m}{i} - \binom{2m}{i-2} \right ].\]
  In particular, $R$ is Artinian with $\hilb_R(m) \neq 0$, while $\hilb_R(m+1) = 0$ and so $\reg(R) = m$.

Moreover, the injectivity of the maps $\mu_i$ above shows $\dim_k(L_i/C_i) =0$ for $i\leq m-1$, so $L$ has no minimal generators below degree $m$ besides the quadratic generators of $C$. The non-injectivity of $\mu_m$ shows that $L$ has minimal generators in degree $m$.

Consider the graded short exact sequence
\[0 \to S/L(-2) \xto{\ell^2} S/C \xto{} S/I \to 0.\]
Since $\beta_{1,m+2}(S/C) = 0$ and $\beta_{1,m+2}(S/L(-2)) = \beta_{1,m}(S/L) \neq 0$, it follows from the long exact sequence of $\tor$ that $\beta_{2,m+2}(S/I) \neq 0$.  Since the only minimal generators for both $C$ and $L$ below degree $m$ are those in the complete intersection $C$, it follows that $\beta_{i,j}(S/C) = 0$ for $j < 2i$ and 
\[
\beta_{i,j}(S/L(-2)) = \beta_{i,j+2}(S/L) = 0 \text{ for } j < \min\{m-2+i,2i-2\}.
\]
 Again by the long exact sequence of $\tor$ we obtain that $\beta_{i,j}(S/I) = 0$ for $j < \min\{2i, m+i\}$.

On the other hand, since $R = S/I$ is Artinian and its Hilbert function satisfies
\[
\hilb_R(i)=
\begin{cases}
 \left[ \binom{2m}{i} - \binom{2m}{i-2} \right] = 0 & \text{ for }i > m \text{ and } \\
 \binom{2m}{m} - \binom{2m}{m-2} \neq 0,  &\text{ for }i = m.  
\end{cases}
\]
 it follows that $\reg(R) = m$; so the vanishing of Betti numbers $\beta_{i,j}(R) = 0$ for $j < \min\{2i, m+i\}$ forces $\beta_{i,m + i}(R) \neq 0$ for $m < i \le 2m$.  In particular, $R$ is superlevel.

Finally to see that $R = S/I$ is not Koszul, note that if $m \ge 3$, then $t_2(R) = m+2 > 4 = t_1(R) + t_1(R)$.  If $R$ were Koszul, then this would contradict \cite[Theorem 6.2]{ACI2}. For $m=2$ the non Koszul property can be checked by direct computation in Macaulay2 \cite{M2}.
\end{proof}

We now construct a family of quadratic, Artinian, Gorenstein graded rings that have several bad properties. 

\begin{thm}\label{subgor} 
Fix an integer $m \ge 2$ and let $k$ be a field with $\ch(k) = 0$ or $\ch(k)>2m+1$.  There exists a quadratic, Artinian, Gorenstein, graded $k$-algebra $A$ with the following properties
\begin{enumerate}
\item $\codim(A) = 2m + \left [ \binom{2m}{m} - \binom{2m}{m-2}\right ]$.
\item $A$ is not Koszul.
\item $t_2(A) = m+2$.
\item $\reg(A) = m + 1$.
\item The Hilbert function of $A$ is 
\[\mathrm{HF}_A(i)=\left [ \binom{2m}{i} - \binom{2m}{i-2}\right ] + \left [ \binom{2m}{m-i+1} - \binom{2m}{m - i - 1}\right ].\]
\end{enumerate}
In particular, 
\begin{itemize}
\item $A$ does not satisfy the subadditivity property if $m \ge 3$, and 
\item $A$ has a non-unimodal Hilbert function if $m \ge 7$ and thus does not satisfy the weak or strong Lefschetz properties.
\end{itemize}
\end{thm}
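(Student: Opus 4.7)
The plan is to take $A := \tilde{R}$, the idealization of the ring $R = S/I$ constructed in Lemma \ref{CMsubadd}. That lemma gives that $R$ is Artinian, quadratic, superlevel, with $\reg(R) = m$ and $\type(R) = \mathrm{HF}_R(m) = [\binom{2m}{m} - \binom{2m}{m-2}]$, satisfies $\beta_{2,m+2}^S(R) \ne 0$, and is non-Koszul for $m \ge 3$ (the case $m = 2$ being handled directly in Macaulay2). Applying parts (1)--(3) of Theorem \ref{idealize} then yields (1), (2), and (4) of the conclusion: $A$ is a standard graded, Artinian, Gorenstein, quadratic $k$-algebra, non-Koszul for $m \ge 3$, with $\codim(A) = 2m + [\binom{2m}{m} - \binom{2m}{m-2}]$ and $\reg(A) = m+1$.

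The Hilbert function formula (5) follows from the graded $k$-vector space decomposition $A = R \oplus \omega_R(-m-1)$. Graded Matlis duality for Artinian $k$-algebras gives $\mathrm{HF}_{\omega_R}(j) = \mathrm{HF}_R(-j)$, whence $\mathrm{HF}_{\omega_R(-m-1)}(i) = \mathrm{HF}_R(m+1-i)$. Substituting the formula of Lemma \ref{CMsubadd}(1) then yields (5).

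For (3), $t_2(A) = m+2$, let $\tilde{S} = S[y_1, \ldots, y_t]$ as in Theorem \ref{idealize}(4), where $t = \type(R)$. The short exact sequence $0 \to \omega_R(-m-1) \to A \to R \to 0$ is one of $\tilde{S}$-modules on which each $y_i$ acts as zero, since $\omega_R$ squares to zero in $A$. A K\"unneth-type identity then gives $\beta_{i,j}^{\tilde{S}}(M) = \sum_{a+b=i} \beta_{a, j-b}^S(M) \binom{t}{b}$ for $M \in \{R, \omega_R(-m-1)\}$. Combined with Lemma \ref{CMsubadd}(3), this yields $\beta_{2, m+2}^{\tilde{S}}(R) \ne 0$, while the linear presentation of $\omega_R(-m-1)$ (from superlevelness) forces $\beta_{1, m+2}^{\tilde{S}}(\omega_R(-m-1)) = 0$. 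The long exact sequence in $\tor^{\tilde{S}}(-, k)$ then supplies a surjection $\tor_2^{\tilde{S}}(A, k)_{m+2} \onto \tor_2^{\tilde{S}}(R, k)_{m+2}$, proving $t_2(A) \ge m+2$. The matching upper bound bounds $\beta_{2, j}^S(\omega_R)$ via Auslander-Buchsbaum duality relating the resolutions of $R$ and $\omega_R$, and is the step I expect to be most delicate.

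Finally, subadditivity fails for $m \ge 3$ because $A$ is quadratic, so $t_1(A) = 2$ and $t_1(A) + t_1(A) = 4 < m+2 \le t_2(A)$. Non-unimodality of $\mathrm{HF}_A$ for $m \ge 7$ is verified by direct substitution into formula (5), exhibiting an adjacent pair of degrees where the Hilbert function strictly decreases then strictly increases; the standard fact that the weak Lefschetz property implies a unimodal Hilbert function then rules out both the weak and strong Lefschetz properties.
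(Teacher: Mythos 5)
Your overall strategy---take $A = \tilde R = R \ltimes \omega_R(-m-1)$ with $R$ from Lemma~\ref{CMsubadd} and invoke Theorem~\ref{idealize}---is exactly the paper's, and your treatment of (1), (2), (4), (5) and of the deduction of the two ``in particular'' claims from (3) and (5) is fine. For the lower bound $t_2(A)\geq m+2$, you use the short exact sequence $0\to\omega_R(-m-1)\to A\to R\to 0$ of $\tilde S$-modules annihilated by the $y_i$, together with the K\"unneth decomposition $\tor^{\tilde S}_*(M,k)\cong\tor^S_*(M,k)\otimes_k\Lambda^*(k^t)$ and the observation that $\tor_1^{\tilde S}(\omega_R(-m-1),k)_{m+2}=0$ by the linear presentation of $\omega_R$; this is a valid and clean alternative to the paper's route, which instead uses the presentation $A=T/((I)+\mathcal L+(y)^2)$, the short exact sequence $0\to\mathcal L+(y)^2\to T/IT\to A\to 0$, and a bigrading argument to split the long exact sequence and produce an injection $\tor_2^T(T/IT,k)\hookrightarrow\tor_2^T(A,k)$.

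However, there are two genuine gaps. First, for the upper bound $t_2(A)\leq m+2$ you only gesture at bounding $\beta_{2,j}^S(\omega_R)$ by dualizing the resolution of $R$ and call it ``the most delicate step'' without carrying it out; this can be made to work (one would use $\beta^S_{a,b}(\omega_R)=\beta^S_{2m-a,2m-b}(R)$ together with the vanishing pattern of Lemma~\ref{CMsubadd}, and then run the K\"unneth formula and the long exact sequence in the other direction), but as written it is unfinished. The paper closes the gap much more cheaply: since $A$ is Gorenstein of codimension $c'=2m+t$ and regularity $m+1$, and quadratic, the self-duality of the minimal free resolution of $A$ over $T$ gives $\beta_{i,j}(A)=\beta_{c'-i,\,c'+m+1-j}(A)$; combined with $\beta_{c'-i,\,\ell}(A)=0$ for $\ell\leq c'-i$ when $0<i<c'$, this forces $t_i(A)\leq m+i$ for all $0<i<c'$, in particular $t_2(A)\leq m+2$. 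You should adopt this symmetry argument (or fully flesh out the duality computation).

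Second, your treatment of non-unimodality for all $m\geq 7$ is an assertion, not a proof: ``direct substitution'' verifies individual values of $m$, but for an infinite family one needs a genuine estimate. The paper handles $m=7,8,9$ by explicit computation and then, for $m\geq 10$, proves the inequality $\mathrm{HF}_A(1)>\mathrm{HF}_A(\lfloor m/2\rfloor)$ by bounding a ratio of binomial coefficients from below by $(17/11)^n/(2n+1)$, which exceeds $1$ once $n\geq 6$. You need some such quantitative argument to cover all $m\geq 7$, not just a finite check.
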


\begin{proof} Set $A=\tilde{R}=R\ltimes \omega_R(-m-1)$, where $R=S/I$ is the Artinian algebra introduced in the statement of Lemma \ref{CMsubadd} for $S = k[x_1,\ldots,x_{2m}]$. That $A$ is quadratic, Artinian, non-Koszul, and Gorenstein with the claimed regularity, codimension and Hilbert function follows from Theorem~\ref{idealize} and Lemma~\ref{CMsubadd}.  

Also by Theorem~\ref{idealize} we can write a presentation for $A$ as 
\[\ A=T/M \text{ with } M = ((I) + \mathcal{L} + (y_1,\ldots,y_t)^2),\]
 where $t = \type(R)=\left [ \binom{2m}{m} - \binom{2m}{m-2}\right ]$ and $T = S[y_1,\ldots,y_t]$.  
 It will be useful at this time to view $A$ as a bigraded ring with respect to the grading obtained by assigning degree $(1,0)$ to the variables of $S$  and degree $(0,1)$ to the variables $y_i$. The short exact sequence of bigraded T-modules
 \[ 0\to \mathcal{L} + (y_1,\ldots,y_t)^2 \to T/IT\to A\to 0\] gives rise to a long exact sequence containing the fragment
 \[
 \cdots \to \tor_2^T(\mathcal{L} + (y_1,\ldots,y_t)^2,k)\to \tor_2^T(T/IT,k) \to \tor_2^T(A,k) \to \cdots .
 \]
Since $(\mathcal{L} + (y_1,\ldots,y_t)^2)_{(*,0)}=0$, also $(\tor_2^T(\mathcal{L} + (y_1,\ldots,y_t)^2,k))_{(*,0)}=0$ for all $*\in \N$. By contrast, since $y_1,\ldots,y_t$ is a regular sequence on $T/IT$, it follows that $\tor_2^T(T/IT,k)=\tor_2^S(S/I,k) \tensor_S T$ is concentrated in degrees $(*,0)$. It follows that the long exact sequence above splits inducing an injection $ \tor_2^T(T/IT,k) \into \tor_2^T(A,k)$.
 Hence $t_2(A) \ge t_2(S/I) = m$.  
 
 On the other hand, since $\reg(S/I) = m$, we get from Theorem~\ref{idealize} that $\reg(A) = m+1$; i.e. $t_i(A) \le m+1+i$ for all $i \ge 0$.  Moreover, since $A$ is Gorenstein and quadratic, the symmetry of the Betti table of $A$ over $T$ forces $t_i(A) \le m+i$ for all $i < 2m + t$.  In particular, this implies that $t_2(A) = m + 2 > 2 + 2 = t_1(A) + t_1(A)$ and thus $A$ fails the subadditivity property when $m \ge 3$.

Finally, assume that $m \ge 7$.  If $m = 7$, then $\mathrm{HF}_A(3) = 1988 < 2092 = \mathrm{HF}_A(2)$.  If $m = 8$, then $\mathrm{HF}_A(3) = 6732 < 7191 = \mathrm{HF}_A(2)$.  If $m=9$ then $\mathrm{HF}_A(3) = 24054 < 25346 = \mathrm{HF}_A(2)$. We now show 
$\mathrm{HF}_A(1) > \mathrm{HF}_A(\left \lfloor \frac{m}{2}\right \rfloor)$ for $m \ge 10$. By (5) we have
\[\mathrm{HF}_A(1)=2m+ \binom{2m}{m} - \binom{2m}{m -2}=2m+\frac{2\cdot (2m+1)!}{m!(m+2)!}\]
and thus it suffices  to show that $\frac{2\cdot (2m+1)!}{m!(m+2)!}/\mathrm{HF}_A\left(\left \lfloor \frac{m}{2}\right \rfloor \right)>1$.
Consider first the case $m=2n$, whence by use of Pascal's formula one computes
\begin{eqnarray*}
\mathrm{HF}_A\left(n\right) &= &\binom{4n}{n} - \binom{4n}{n-2} + \binom{4n}{n+1} - \binom{4n}{n-1}\\
&=&\binom{4n+1}{n+1} -\binom{4n+1}{n-1} =\frac{(4n+1)!\cdot 2(2n+1)^2}{(3n+2)!(n+1)!}.
\end{eqnarray*}
We deduce the desired inequality by considering the function
\begin{eqnarray*}
\frac{\frac{2\cdot (2m+1)!}{m!(m+2)!}}{\mathrm{HF}_A\left(n\right)} &=& \frac{2\cdot(4n+1)!}{(2n)!(2n+2)!}\cdot \frac{(3n+2)!(n+1)!}{(4n+1)!\cdot 2(2n+1)^2} \\
&=&\frac{(2n+3)(2n+4)\cdots (3n+1)(3n+2)}{(n+2)(n+3)\cdots(2n)(2n+1)^2}\\
&=&\prod_{i=n+2}^{2n+1} \left(1+\frac{n+1}{i}\right)\cdot\frac{1}{2n+1}\\
&>& \left(1+\frac{n+1}{2n+1}\right)^n\cdot\frac{1}{2n+1}\geq \left(\frac{17}{11}\right)^n\cdot\frac{1}{2n+1}.
\end{eqnarray*}

Clearly the last  function above attains arbitrarily large values asymptotically and one can check that its values surpass $1$ for $n\geq 6$. In the remaining case, $n=5$, the claim $58806=\mathrm{HF}_A(1)> \mathrm{HF}_A\left(n\right)=48279$ can be checked by direct computation.  The case when $m$ is odd is similar and we omit the details.
\end{proof}

That the family of Artinian algebras in Theorem \ref{subgor} fails to satisfy the strong Lefschetz property can be deduced from \cite[Proposition 2.1]{Gondim}, however the stronger statement regarding the failure of the weak Lefschetz property is to our knowledge new. 

\begin{eg} Take $m = 3$ and consider $S = k[x_1,\ldots,x_6]$.  Applying Lemma~\ref{CMsubadd}, we set $C = (x_1^2,\ldots,x_{6}^2)$ and $I = C + (x_1 + \cdots + x_6)^2$.  Then $R = S/I$ has the following Betti table over $S$:
\[
\begin{tabular}{r|ccccccc}
     &0&1&2&3&4&5&6\\
    \hline
    \text{0:}&1&\text{-}&\text{-}&\text{-}&\text{-}&\text{-}&\text{-}\\\text{1:}&\text{-}&7&\text{-}&\text{-}&\text{-}&\text{-}&\text{-}\\\text{2:}&\text{-}&\text{-}&21&\text{-}&\text{-}&\text{-}&\text{-}\\\text{3:}&\text{-}&\text{-}&14&105&132&70&14\\\end{tabular}
    \]

\noindent In particular, $t_1(S/I) = 2$ and $t_2(S/I) = 5$, so $R$ is an Artinian algebra that fails subadditivity; i.e. $t_2(R) > t_1(R) + t_1(R)$.  Moreover, $R$ is superlevel with $\reg(R) = 3$ and $h$-vector $(1,6,14,14)$.

Now we consider $\tilde{R} = R \ltimes \omega_R(-4)$.  By Theorem~\ref{idealize}, $\tilde{R}$ is Artinian, Gorenstein with $h$-vector $(1,20,28,20,1)$ and $\reg(\tilde{R}) = 4$.  As $\tilde{R}$ is quadratic, $t_1(\tilde{R}) = 2$ while $t_2(\tilde{R}) = 5$ by Theorem~\ref{subgor}.  Thus $\tilde{R}$ is an Artinian, Gorenstein algebra for which subadditivity fails.  

It is worth noting that while we know the Hilbert function of $\tilde{R}$ from Theorem \ref{subgor}, the full Betti table of $\tilde{R}$, as a quotient of a polynomial ring in 20 variables, is not so clear.  It would be very interesting to have a full description of the resolution of the idealization $\tilde{R}$ in terms that of $R$.  
\end{eg}

\begin{eg} When $m = 7$, the Gorenstein $k$-algebra $\tilde{R}$ from Theorem~\ref{subgor} has  {\em non-unimodal} $h$-vector 
$ (1,1444,2092,1958,1820,1958,2092,1444,1).$
Thus $\tilde{R}$ is a quadratic Gorenstein algebra with $\codim(\tilde{R}) = 1444$ and $\reg(\tilde{R}) = 8$ and for which both the weak and strong Lefshetz properties fail.
The first example of a (non-quadratic) Gorenstein algebra with non-unimodal $h$-vector was famously constructed by Stanley \cite{S}. \footnote{Stanley's construction was also given via idealization for the ring $A = k[x,y,z]/(x,y,z)^4$.  Since $A$ has $h$-vector $(1,3,6,10)$, its idealization $\tilde{A} = A \ltimes \omega_A$ has $h$-vector $(1,13,12,13,1)$; however, $\tilde{A}$ is clearly not quadratic.}
\end{eg}

\section{Quadratic non-Koszul Gorenstein algebras with linear resolutions of arbitrarily high order}     \label{s5}

Let $k$ be a field of characteristic $0$.  In \cite[Thereom 1']{Roos2} Roos gave examples of graded Artinian non-Koszul quadratic $k$-algebras $A_\alpha$ for integers $\alpha \ge 2$ such that $k$ has a linear resolution for precisely the first $\alpha$ steps before a minimal non-linear syzygy.  Here we note that these algebras are superlevel and that their Gorenstein idealizations have the same property.  This removes any hope of a `finite test' for the Koszul property in the context of quadratic Gorenstein algebras.

To state the result, we first recall the definition of the graded Poincare series of a module.  Fix a graded module $M$ over a graded ring $R$.  The  generating function for the bigraded $k$-vector space $\tor_\ast^R(M,k)_\ast$ is $P_R^M(x,y) := \sum_{i,j} \beta_{x,y}^R(M) x^i y^j$.  Thus the \textit{graded Poincare series} of $R$ is $P_R^k(x,y)$, which encodes the resolution of $k$ over $R$.  

\begin{thm} Let $k$ be a field of characteristic $0$ and fix a positive integer $\alpha \ge 2$.  Let $S = k[u,v,w,x,y,z]$ and 
\[I = (x^2,xy,y^2,yz,z^2,zu,u^2,uv,v^2,vw,w^2,xz+\alpha zw - uw, zw + xu + (\alpha - 2) uw).\]
Then the idealization $\tilde{R}$ of $R = S/I$ is quadratic, Gorenstein, non-Koszul, and $k$ has linear resolution for precisely $\alpha$ steps in the resolution over $\tilde{R}$.
\end{thm}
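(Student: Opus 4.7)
The plan is to deduce the theorem from Roos' original example \cite[Theorem~1']{Roos2} together with Theorem~\ref{idealize}, in three steps.

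First, I would verify that $R = S/I$ as defined in the statement is Artinian, quadratic, and \emph{superlevel}, with $\reg(R) = 2$. Roos' construction already guarantees that $R$ is standard graded, Artinian, quadratic, non-Koszul, and that the minimal free resolution of $k$ over $R$ is linear for precisely the first $\alpha$ steps. What is not addressed in \cite{Roos2}, and must be added here, is the superlevel property, i.e.\ that $\omega_R$ is linearly presented as an $S$-module. For each fixed $\alpha$ this is a finite Macaulay2 \cite{M2} check; because the ideal $I$ depends on $\alpha$ only through the two scalar coefficients in the final pair of relations, one expects the shape of the Betti table of $R$ over $S$ to be uniform in $\alpha$, so a verification at one small value should propagate, and the persistence of the relevant syzygy patterns in $\alpha$ is what needs justification.

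Once superlevel is established, Theorem~\ref{idealize}(1)--(3) immediately shows that $\tilde{R} = R \ltimes \omega_R(-3)$ is a standard graded Artinian Gorenstein quadratic $k$-algebra which is non-Koszul. The remaining, and most substantial, point is that the resolution of $k$ over $\tilde{R}$ is linear for precisely $\alpha$ steps. I would approach this via the change-of-rings machinery attached to the surjection $\tilde{R}\onto R$ whose kernel $M := \omega_R(-3)$ is a square-zero ideal of $\tilde{R}$. This yields a Gulliksen-type formula expressing the bigraded Poincar\'e series $P^k_{\tilde{R}}(x,y)$ as a rational function of $P^k_R(x,y)$ and $P^M_R(x,y)$. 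Since $M$ is generated in a single top degree and is linearly presented, the contribution of $P^M_R$ to $P^k_{\tilde{R}}$ in low homological degree lies along the linear strand; consequently the linear strand of $\tor^{\tilde{R}}_i(k,k)$ for $i \le \alpha$ matches that of $\tor^R_i(k,k)$, while the nonlinear class at step $\alpha+1$ over $R$ pulls back to a nonzero nonlinear class over $\tilde{R}$.

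The main obstacle is this third step. To make the argument rigorous, one must control the bigraded bookkeeping well enough to guarantee both that no nonlinear class appears in $\tor^{\tilde{R}}_i(k,k)$ for $i \le \alpha$ and that the nonlinear class at step $\alpha+1$ over $R$ is not accidentally cancelled by a contribution from the $M$-factor. Working bigradedly --- assigning degree $(1,0)$ to the variables of $S$ and degree $(0,1)$ to the idealization variables, as in the proof of Theorem~\ref{subgor} --- should separate the two contributions cleanly, and the square-zero structure $M^2 = 0$ in $\tilde{R}$ should force the relevant portion of the associated spectral sequence to split, making the bookkeeping tractable.
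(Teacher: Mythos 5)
Your first two steps — checking that $R$ is Artinian, quadratic, superlevel of regularity $2$ and invoking Theorem~\ref{idealize} — match the paper exactly, and the Gulliksen change-of-rings formula
\[
P^k_{\tilde{R}}(x,y) = P^k_R(x,y)\bigl(1 - xy\, P^{\omega_R}_R(x,y)\bigr)^{-1}
\]
is indeed the right tool for the third step. But this is where a genuine gap appears, and it is not merely a matter of ``bigraded bookkeeping.'' The formula involves the \emph{full} Poincar\'e series $P^{\omega_R}_R(x,y)$, i.e.\ the entire minimal resolution of $\omega_R$ over $R$, not just its presentation. The superlevel hypothesis (linear presentation of $\omega_R$) controls only the first two Betti numbers of $\omega_R$; it gives you no handle on $\beta^R_i(\omega_R)$ for $i \geq 2$. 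Since you need the linear strand of $\tor^{\tilde{R}}_\ast(k,k)$ to track $\tor^R_\ast(k,k)$ through homological degree $\alpha$ and then break at $\alpha+1$, you must control the first $\alpha$ steps of the resolution of $\omega_R$ over $R$, which your argument never touches. Saying that ``$M$ is generated in a single top degree and is linearly presented'' is not enough to conclude that its $R$-resolution stays in the linear strand far enough out; that is precisely what must be proved.

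The paper resolves this by exploiting an extra layer of structure you are missing: Roos' ring $R$ is \emph{itself} an idealization $R = A \ltimes M(-1)$ with $A = B \otimes_k C$ a tensor product of two small local rings. This permits a second application of Gulliksen's formula, writing $P^{\omega_R}_R$ in terms of $P^{\omega_R}_A$ and $P^M_A$, and Roos' own results already identify the linearity window for $M$ over $A$. One then needs a structural description of $\omega_R$, supplied by Loan's isomorphism $\omega_R(-2) \cong \omega_M(-1) \ltimes \omega_A(-2)$, which reduces the problem to showing that $\omega_A(-2)$ and $\omega_M(-1)$ each have \emph{fully} linear $A$-resolutions; the former follows from the tensor-product decomposition $A = B \otimes_k C$, and the latter is established by an explicit bar-resolution computation using the multiplication tables for $M$ and $\omega_M$. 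Without recognizing the two-stage idealization structure of $R$ and invoking Loan's canonical-module decomposition, there is no evident way to carry out the Poincar\'e series analysis your outline requires.
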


\begin{proof} By \cite[Theorem 1']{Roos2}, $R$ has Hilbert series $\mathrm{HS}_R(t) = 1 + 6t + 8t^2$, whence  $\reg(R) = 2$, and $\type(R) = 8$, and it is easy to check with Macaulay2 \cite{M2} that $R$ is superlevel.  Thus $\tilde{R} = R \ltimes \omega_R(-3)$ is Artinian, quadratic, and Gorenstein with $\reg(\tilde{R}) = 3$ and $\codim(\tilde{R}) = 6+8 = 14$.  

We need only argue that the resolution of $k$ over $\tilde{R}$ is linear for exactly $\alpha$ steps. To achieve this, we recall the details of the construction of $R$ from \cite{Roos2}. It is shown therein that $R$ itself is an idealization $R=A\ltimes M(-1)$, where $A=k[x,y,u,v]/(x^2,xy,y^2,v^2,vw,w^2)= B \otimes_k C$, $B = k[x,y]/(x,y)^2$, and $C =  k[v,w]/(v,w)^2$ and $M$ is an $A$-module with Hilbert series $\mathrm{HS}_M(t)=2+4t$.
By a result of Gulliksen \cite[Theorem 2]{Gulliksen} combined with the fact that $M$ and $\omega_R$ are linearly presented, the relevant graded Poincar\'e series are related by 
\begin{eqnarray}
P_{\tilde{R}}^k(x,y) &=& P_R^k(x,y)(1 - xy P_R^{\omega_R(-2)}(x,y))^{-1}\\
P_{R}^{\omega_R}(x,y) &=& P_A^{\omega_R}(x,y)(1 - xy P_A^M(x,y))^{-1}.
\end{eqnarray}
Since \cite{Roos2} shows that both the resolution of $k$ over $R$ and the resolution of $M$ over $A$ are linear for exactly $\alpha$ steps, it suffices to show that the resolution of $\omega_R$ over $A$ is linear. 

By \cite[proof of Claim 2]{Loan} there is an isomorphism of $R$-modules $\omega_R(-2) \cong \omega_M(-1) \ltimes \omega_A(-2)$, with the $R=A\ltimes M(-1)$-module structure given by $(a,m)\cdot(s,t)=(as,at+s(m))$, where we view $s\in \hom_A(M,\omega_A)\cong  \omega_M$.
This induces an isomorphism of $A$-modules $ \omega_R(-2)  \cong  \omega_M(-1) \oplus \omega_A(-2)$, and we note that both $\omega_M(-1)$ and $\omega_A(-2)$ are generated in degree $0$.  
Thus it suffices to show that both $\omega_A(-2)$ and $\omega_M(-1)$ have linear resolutions over $A$.
The former module decomposes as a tensor product $\omega_A(-2)\cong\omega_B(-1)\otimes_k \omega_C(-1)$.  Thus the minimal free resolution of $\omega_A(-2)$ over $A$ is in turn the tensor product of the resolutions of $\omega_B(-1)$ over $B$ and $\omega_C(-1)$ over $C$. Since the rings $B$ and $C$ contain no elements of degree greater than one, the differentials in the resolutions of $\omega_B(-1)$ and $\omega_C(-1)$ are linear. As $B$ and $C$ are level of regularity 1, $\omega_B(-1)$ and $\omega_C(-1)$ are each generated in degree zero, hence  their resolutions are linear and so is the resolution of $\omega_A(-2)$ over $A$. 

Finally, in order to analyze the resolution of $\omega_M(-1)$ over $A$ one must dig deeper into the structure of the module $M$. Since $A$ is Artinian, \cite[Proposition 21.1]{Eis} shows that $\omega_M(-1) \cong\hom_k(M(+1),k)$ as an $A$-module. Consequently the Hilbert series of this module is $\mathrm{HS}_{\omega_M(-1)}(t)=4+2t$. Fix $k$-bases  $\{f_1^*, f_2^*, f_3^*, f_4^*\}$ for $ ({\omega_M(-1)})_0$ and $\{e_1^*, e_2^*\}$ for $({\omega_M}(-1))_1$, where $e_i, f_i$ refer to the dual elements in $M_0$ and $M_1$ respectively.
The $A$-module structures of $M$ and $\omega_M$ can be completely described by the following dual tables, the first of which can be deduced from \cite[Equation (3)]{Roos}
\[
\begin{small}
\begin{tabular}{c|cccc}
            & $f_1$ & $f_2$ & $f_3$ & $f_4$\\
            \hline
$e_1$ & $v$ & $w$ & $x+\alpha w$ & $0$\\
$e_2$ & $0$ & $(\alpha-2)w-x$ & $w$ & $y$
\end{tabular}
\qquad
\begin{tabular}{c|cccc}
            & $f_1^*$ & $f_2^*$ & $f_3^*$ & $f_4^*$\\
            \hline
$e_1^*$ & $v$ & $w$ & $x+\alpha w$ & $0$\\
$e_2^*$ & $0$ & $(\alpha-2)w-x$ & $w$ & $y$
\end{tabular}
\end{small}
\]
The leftmost table should be interpreted to mean that, for example, $ve_1=f_1$ in $M$, while the rightmost table should be interpreted to mean that, dually, $vf_1^*=e_1^*$ in $\omega_M$.  (That for instance $ye_1 = 0$ is thus implicit in the table.)  Equipped with this information, we implement a strategy for determining $\tor^A(\omega_M(-1),k)$ inspired by Roos's original approach: we compute the homology of the tensor product of the bar resolutions for $k$ over $B$ and $C$ further tensored with $\omega_M$. Since $\omega_M$ is concentrated in only two degrees,  the only potential nonzero components of  $\tor_i^A(\omega_M(-1),k)$ are in degrees $i$ and $i+1$. Furthermore $\tor_i^A(\omega_M(-1),k)_{i+1}$ is the cokernel of
\[
(\omega_M(-1))_0\otimes_k  \left(\bigoplus_{q=0}^i B_1^{\otimes i-q}\otimes_k C_1^{\otimes i}\right)  \to (\omega_M(-1))_1\otimes_k \left(\bigoplus_{q=0}^{i-1} B_1^{\otimes i-q-1}\otimes_k C_1^{\otimes i}\right)
\]
\[
s\otimes b_1\otimes \cdots \otimes b_{i-q}\otimes c_1\otimes \cdots\otimes c_q \mapsto
\]
\[
s b_1\otimes \cdots \otimes b_{i-q}\otimes c_1\otimes \cdots\otimes c_q +(-1)^{i-q} sc_1\otimes b_1\otimes \cdots \otimes b_{i-q}\otimes c_2\otimes \cdots\otimes c_q.
\]
It is easily verified that this map is surjective as for arbitrary $\mu, \lambda\in  k, b_j\in B_1, c_j\in C_1$ we have 
\[
\lambda f_4^*\otimes y\otimes  b_2\otimes \cdots \otimes b_{i-q}\otimes c_1\otimes \cdots\otimes c_q +(-1)^{i-q}\mu f_1^*\otimes  b_2\otimes \cdots \otimes b_{i-q}\otimes v \otimes c_1 \cdots\otimes c_q\mapsto 
\]
\[
(\mu e_1^*+\lambda e_2^*) \otimes  b_2\otimes \cdots \otimes b_{i-q}\otimes c_1\otimes \cdots\otimes c_q.
\]
Thus $\tor_i^A(\omega_M(-1),k)_{i+1}=0$ for $i>0$, which concludes the proof.
\end{proof}


\subsection*{Acknowledgment}
We thank Matt Mastroeni, Hal Schenck, and Mike Stillman for useful conversations and bringing Question~\ref{rc} to our attention.

 \end{document}